\title[Positively Expansive Measures]{Expansive Measures versus  Lyapunov exponents}
\author{A. Armijo and M. J. Pacifico}
\thanks{Partially supported by CAPES, CNPq, FAPERJ}
\newcommand{\ep} {\epsilon}
      \newcommand{\La}{\Lambda}
\newcommand{\N}{\mathbb{N}}
\newcommand{\ov}{\overline}
\newcommand{\D}{{\mathcal Diff}}
\newcommand{\SA}{{\mathcal A}}
\newcommand{\SI}{{\mathcal I}}
\newcommand{\SM}{{\mathcal M}}
\newcommand{\SO}{{\mathcal O}}
\newcommand{\SP}{{\mathcal P}}
\newcommand{\SU}{{\mathcal U}}
\newcommand{\dist}{\operatorname{dist}}
\newcommand{\dime}{\operatorname{dim}}
\newcommand{\Diff}{\operatorname{Diff}}
\newcommand{\Per}{\operatorname{Per}}
\newtheorem{maintheorem}{Theorem}
\newtheorem{mainlemma}{Lemma}
\newtheorem{theorem}{Theorem}
\newtheorem{corollary}[theorem]{Corollary}
\newtheorem{lemma}[theorem]{Lemma}
\newtheorem{Claim}{Claim}
\DeclareMathOperator{\supp}{supp}
\begin{document}

\begin{abstract}
In this paper we investigate the relation between measure expansiveness and hyperbolicity. 
We prove that non atomic invariant ergodic measures with all of its Lyapunov exponents  positive is positively measure-expansive. 
We also prove that local diffeomorphisms robustly positively measure-expansive is expanding.
Finally, we prove that if a $C^1$ volume preserving diffeomorphism that. can not be accumulated by positively measure expansive diffeomorphis have a dominated sppliting.
\end{abstract}

\maketitle

\section{Introduction}

The notion of expansiveness was introduced by Utz in the middle of the twentieth century, see \cite{Ut}.
Roughly speaking, expansiveness means that  orbits through  different points separate when time evolves.
This notion is very important in the context of the theory of dynamical systems and is shared by a large class of dynamical
systems exhibiting chaotic behavior. Nowadays there is an extensive literature about these systems. See, for instance, \cite{Ft,Hi,Le,Vi} and 
references therein.
Examples of expansive systems are hyperbolic diffeomorphisms defined on compact manifolds. 
This includes Anosov systems and the non-wandering set of Axiom A diffeomorphisms. 

Recently it was introduced in \cite{Mo} the notion of 
measure-expansiveness that generalizes the concept of expansiveness.
Roughly speaking, a system is measure is expansive if the set of points
whose orbit is near the orbit of a given point is zero. 
There are already a consistent literature respect measure-expansive systems, relating this property with expansiveness, 
ergodicity and some other properties already established elsewhere.  We refer  to \cite{ArM,PV,BF} and references therein for more on this. 

The purpose of this work is to exploit more this notion and establish some of its relation with expansiveness, existence of some weak form of 
hyperbolicity and existence of positive Lyapunov exponents.

To announce precisely our results, let us introduce some definitions.
To this end, let $(M, d)$ be a compact boundaryless Riemannian manifold and 
$\Diff^1_{loc}(M)$ be the set of $C^1$ local diffeomorphisms $f:M\to M$.
 Let $\mathcal{M}(M)$ be the space of Borel probability measures of $M$.

Recall that $\mu \in \mathcal{M}(M)$   is atomic if there is a point $x \in M $
such that $\mu({x}) > 0$. 
The set of atomic measures $\mathcal{A}(M)$ of $M$ is dense 
 in $\mathcal{M}(M)$. 
A measure $\mu$ is $f$-invariant if $\mu(f^{-1}(B))=\mu(B)$ for
every measurable set $B$ and $\mu$ is ergodic if the measure of any invariant set is zero or one. We denote $\mathcal{M}_f(M)$ for the set of $f$-invariant probability measures on $M$. 

The classical Oseledets's theorem   \cite{Os}, asserts that
for any  
$f$-invariant 
measure $\mu$, 
there exist a 
$Df$-invariant (measurable) splitting $$T_x M=E_1(x) \oplus \cdots \oplus E_{k(x)}(x)$$ and numbers 
${\lambda_1} (x) < {\lambda_2} (x) < \cdots < {\lambda_{k(x)}} (x)$ so that for all $v \in E_i (x) \setminus \{0\}$, it holds
$${\lambda_i} (x) = \displaystyle{\lim_{n\rightarrow \infty} \frac{1}{n} \log \| Df^n (x)v\|}$$
for $\mu$-almost all $x \in M.$ 
The numbers $\lambda_i(x), \, 1\leq i \leq k(x),$ are called the \emph{Lyapunov exponents} of $f$ at $x$.

We say that  $f \in \Diff^1_{loc}(M)$ 
is {\em{positively measure expansive for $\mu$}} (or {\em{positively $\mu$-expansive}} for short) if there is a constant $\delta > 0$ such that $\mu(\Gamma^+_\delta(f, x))=0$ for all $x \in M$, where 

\begin{equation}\label{Gama}
\Gamma^+_\delta(f, x)\equiv\{y\in M\,/\,
d(f^n(x),f^n(y))\leq \delta,\,\, \mbox{for all}\,\,n\in\N\}.
\end{equation}

Note that every positively $\mu$-expansive map is positively
$\mu$-expansive for any $\mu \in \mathcal{M}(M) \setminus \mathcal{A}(M)$.

 The first result 
in this paper establishes that local diffeomorphisms with positive Lyapunov exponents  are   positively $\mu$-expansive. 

\begin{maintheorem}
\label{teoa}
Let $f\in \Diff^1_{loc}(M)$ 
and $\mu\in \mathcal{M}_f\setminus \mathcal{A}(M) $ an 
ergodic probability measure  such that  all of its Lyapunov exponents are 
positive. Then $f$ is positively $\mu$-expansive.
\end{maintheorem}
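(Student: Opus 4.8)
The plan is to exploit Pesin theory / the Oseledets splitting at the level of a full-measure set and show that along such points the local unstable behaviour forces $\Gamma^+_\delta(f,x)$ to be small. Since all Lyapunov exponents of $\mu$ are positive, say $\lambda_i(x) \geq \lambda > 0$ for $\mu$-a.e.\ $x$ and all $i$ (ergodicity makes the exponents constant a.e.), the map $f$ is \emph{nonuniformly expanding} on a set of full $\mu$-measure. First I would invoke the (local) unstable manifold theorem in this setting: for $\mu$-a.e.\ $x$ there is a number $r(x)>0$ and a neighbourhood structure such that any $y$ with $d(f^n(x),f^n(y))\le r(x)$ for all $n\ge 0$ must in fact satisfy $d(f^n(x),f^n(y))\to 0$ exponentially as $n\to\infty$; equivalently, the backward-contracting behaviour of $f$ along the orbit of $x$ pins $y$ to $x$. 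The cleanest way to phrase this without invoking heavy Pesin machinery is through a Pliss-type argument: positivity of all exponents gives, for a.e.\ $x$, infinitely many "hyperbolic times" $n_k\to\infty$ at which $\|Df^{n}(x)v\|$ grows uniformly for intermediate iterates, and at such times a whole ball of definite radius around $f^{n_k}(x)$ is expanded by $f^{-n_k}$ into a ball of radius $\le C e^{-\lambda n_k/2}$ around $x$.

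The key steps, in order, are as follows. Step 1: use ergodicity to get a common value $\lambda>0$ below all the exponents and a full-measure set $R$ of Lyapunov-regular points. Step 2: fix $x\in R$ and, using that $\frac1n\log\|Df^n(x)^{-1}\|^{-1}\to$ (smallest exponent) $>0$ along the orbit, extract times $n_k\to\infty$ and a uniform radius $\rho>0$ (depending measurably on $x$) such that $f^{n_k}$ restricted to the $\rho$-ball about $x$ is a diffeomorphism onto its image containing the $\rho$-ball about $f^{n_k}(x)$, with $f^{-n_k}$ there contracting by $e^{-\lambda n_k/2}$. Step 3: choose $\delta$: by Lusin's theorem the measurable function $x\mapsto \rho(x)$ is bounded below by some $\delta_0>0$ on a set $K$ with $\mu(K)>1/2$ (say); set $\delta<\delta_0$. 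Step 4: given any $x\in M$, if $y\in\Gamma^+_\delta(f,x)$ then $d(f^n(x),f^n(y))\le\delta$ for all $n$; applying Step 2 at a regular point in the orbit closure — or, more directly, applying the Poincaré recurrence / Birkhoff argument to see that the orbit of $x$ spends a positive fraction of time in $K$, hence returns to $K$ at a subsequence of the hyperbolic times $n_k$ — we get $d(x,y)=d(f^{-n_k}(f^{n_k}x),f^{-n_k}(f^{n_k}y))\le e^{-\lambda n_k/2}\,\delta\to 0$, forcing $y=x$. Therefore $\Gamma^+_\delta(f,x)=\{x\}$ for $\mu$-a.e.\ $x$, and since $\mu$ is non-atomic, $\mu(\Gamma^+_\delta(f,x))=0$ for \emph{every} $x$ (for $x$ outside the good set one still has $\Gamma^+_\delta(f,x)$ disjoint from the good full-measure set on which it would have to contain a second point, so it is $\mu$-null).

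The main obstacle is Step 2 together with the uniformization in Step 3: the unstable-manifold / hyperbolic-time radius $\rho(x)$ genuinely depends on $x$ and a priori only measurably, so one cannot get a single $\delta$ working at every point for free. This is exactly where non-atomicity of $\mu$ and a Pliss/Lusin compactness argument must be combined: one proves the statement "$\Gamma^+_\delta(f,x)$ has no point other than $x$ in the good set $K_\delta$" for the \emph{a.e.}\ choice of $x$, and then upgrades $\mu(\Gamma^+_\delta(f,x))=0$ to \emph{all} $x$ using that any point of $\Gamma^+_\delta(f,x)$ lying in $K_\delta$ would itself be a regular point whose forward orbit stays $\delta$-close to that of $x$, contradicting the contraction estimate applied at the regular point's own hyperbolic times. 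I would expect the write-up to spend most of its length making the hyperbolic-time estimate and the dependence of $\rho$ on $x$ precise, and comparatively little on the final measure-theoretic bookkeeping.
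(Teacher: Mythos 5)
Your proposal follows essentially the same route as the paper: both reduce the statement to a Pliss-lemma/hyperbolic-times argument and then contract backwards along inverse branches at those times to conclude $d(x,y)=0$ for every $y\in\Gamma^+_\delta(f,x)$ at a generic point $x$. The only substantive difference is that the paper obtains a single uniform radius $\delta$ directly --- by passing to an iterate $g=f^l$ with $\int\log\|Df^l(x)^{-1}\|\,d\mu<-4c$ and using uniform continuity of $x\mapsto\|Dg(x)^{-1}\|$ on the compact manifold --- so your Lusin/measurable-radius step is not needed; conversely, your closing remark upgrading the conclusion from $\mu$-a.e.\ $x$ to all $x\in M$ (via non-atomicity and the fact that $\Gamma^+_\delta(f,x)$ can contain at most one regular point) addresses a point the paper itself glosses over.
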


An open class of dynamical systems such that every non-atomic measure is positively expansive is the class of expanding endomorphisms.
 Recall that a map $f \in \Diff^1_{loc}(M) $
 is {\em{expanding}} if there exists $\beta > 1$ and $K>0$ such that for every $x \in M$ we have 
 $$||Df^n (x)||\geq K\beta^n,\,\ n \in \N.$$
Next we want relate positively $\mu$-expansiveness to expansiveness for maps $f \in \Diff^1_{loc}(M)$.
In this direction we point out that in \cite{Sa} the author proved that the $C^1$-interior of the set of positively measure expansive $C^1$ diffeomorphisms coincides 
with the interior of the set of $C^1$ expansive diffeomorphisms.
In \cite{SSY} it is proved that  the interior of the set of measure-expansive diffeomorphisms coincides with the interior of the expansive diffeomorphisms. 
More recently, in \cite{ LLMS} the authors proved that $C^1$-generically, a differentiable map is positively $\mu$-expansive if and only if it is expanding.
The next result is a version for local diffeomorphisms of \cite[Theorem A]{LLMS} and establishes that local diffeomorphisms $C^1$ robustly positively $\mu$-expansive are expanding.

\begin{maintheorem}\label{teob}
Let $f \in \Diff^1_{loc}(M)$
 and suppose that there is a $C^1$-open neighborhood $\mathcal{U}$ of $f$ such that  
every $g\in \mathcal U$ is  
positively $\mu$-expansive for all $\mu \in \mathcal{M}(M)\setminus \mathcal{A}(M)$.
Then $f$ is expanding.
\end{maintheorem}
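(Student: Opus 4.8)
The plan is to argue by contradiction: assuming $f$ is not expanding, I will perturb $f$ inside $\mathcal U$ to produce a map $g$ that fails to be positively $\mu$-expansive for some non-atomic measure $\mu$, contradicting the robust hypothesis. The starting point is a $C^1$ analogue of Mañé's techniques and the Franks lemma for local diffeomorphisms. Since $f$ is not expanding, for every $\beta>1$ and every $K>0$ there is a point $x$ and an iterate $n$ with $\|Df^n(x)\| < K\beta^n$; taking $\beta$ close to $1$ and $K$ small, one extracts a point whose forward orbit has a direction along which the derivative cocycle is not uniformly expanding. Passing to a suitable limit measure supported on such an orbit (or on its $\omega$-limit set), one finds an $f$-invariant measure $\nu$ possessing a non-positive Lyapunov exponent. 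This is the structural input that I want to convert, after perturbation, into an actual coincidence of nearby orbits.

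The core step is then a perturbation argument. Starting from the invariant measure $\nu$ with a Lyapunov exponent $\lambda \le 0$, I would use a Franks/Mañé-type perturbation lemma for $C^1$ local diffeomorphisms to modify $Df$ along a long finite segment of a generic (in the sense of Oseledets) orbit of $\nu$, making the derivative in the weakest direction have norm exactly $1$ (or even $<1$) along that segment, while remaining $C^1$-close to $f$ and staying inside $\mathcal U$. One then closes the orbit segment (via a closing-lemma type argument, or by working directly with a periodic point obtained first from $\nu$) to obtain $g \in \mathcal U$ with a periodic point $p$ of period $N$ such that $Dg^N(p)$ has an eigenvalue of modulus $\le 1$. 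If the eigenvalue has modulus $1$, one further perturbs to make it have modulus strictly less than $1$ (if it is $\pm 1$ or complex on the unit circle, a small rotation/scaling perturbation achieves this), so that $p$ has a local "stable" direction for $g$.

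With a periodic point $p$ of $g$ admitting a contracting direction, the conclusion is routine. Along the local strong stable manifold (or the local center-stable set) of the periodic orbit of $p$ there is a non-trivial arc $W$ whose $g$-forward orbit stays within $\delta$ of the orbit of $p$ for the given expansiveness constant $\delta$ of $g$; more precisely, for any point $y \in W$ one has $d(g^n(y), g^n(p)) \to 0$, hence $g^n(y)$ stays $\delta$-close to $g^n(p)$ for all $n \in \N$, so $W \subset \Gamma^+_\delta(g,p)$. Choosing $\mu$ to be a non-atomic measure giving positive mass to the arc $W$ (for instance, normalized arclength on $W$), we get $\mu(\Gamma^+_\delta(g,p)) > 0$, contradicting that $g$ is positively $\mu$-expansive for this non-atomic $\mu$. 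Since $\delta$ was arbitrary (the contradiction must hold for whatever constant witnesses positive $\mu$-expansiveness of $g$), this finishes the proof.

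The main obstacle I expect is the perturbation step in the local-diffeomorphism (non-invertible) setting: one must have a workable Franks/Mañé lemma and closing lemma for $C^1$ local diffeomorphisms that allow altering the derivative cocycle along an orbit segment while controlling the global $C^1$-size of the perturbation and keeping the map a local diffeomorphism. In the diffeomorphism case this is classical (and is exactly what is used in \cite{LLMS}), but for endomorphisms one must be careful that the perturbation does not destroy local invertibility and that the orbit segment one perturbs along is genuinely injective (no self-intersections of the finite orbit), which may require first discarding a small piece of the orbit or passing to a subsequence. A secondary technical point is extracting the invariant measure with a non-positive exponent from the failure of expanding: this uses that $\|Df^n\|$ subadditivity gives, by Kingman's subadditive ergodic theorem applied to limit measures, an exponent bounded above by $0$; making this rigorous requires a standard but careful compactness argument in $\mathcal{M}_f(M)$.
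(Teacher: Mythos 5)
Your overall architecture coincides with the paper's: argue by contradiction, produce (after a $C^1$-small perturbation inside $\mathcal U$) a periodic point $p$ whose derivative over the period has an eigenvalue of modulus $\leq 1$, exhibit a one-dimensional manifold inside $\Gamma^+_\delta(\cdot,p)$, and put normalized Lebesgue measure on it to contradict positive measure-expansiveness. The last two steps are fine modulo one small slip: $d(g^n(y),g^n(p))\to 0$ does not by itself force the orbit of $y$ to stay $\delta$-close for \emph{all} $n\in\N$; you must take $W$ to be the local stable manifold of size at most $\delta$, which has that property by construction. (The paper treats the modulus-one case differently, via its Main Lemma: Franks' lemma is used to make $h$ linear near the orbit of $p$, so that $\exp_p(E_p)\cap V$ is literally invariant and contained in $\Gamma^+_\delta(h,p)$; your alternative of first perturbing the eigenvalue to be strictly contracting and then using the genuine stable manifold, as in the paper's Corollary \ref{valesemperturbar}, is equally legitimate and arguably simpler.)

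The genuine gap is in how you produce the periodic point. The paper gets it in one line by quoting \cite[Theorem 1.3]{Ar}: if $f$ is not expanding, then some $g$ in any given neighborhood of $f$ has a periodic point with an eigenvalue of modulus $\leq 1$. You instead propose to (i) extract an invariant measure with a non-positive Lyapunov exponent from the failure of the expanding estimate, and then (ii) close up a generic orbit of that measure with control on the derivative, via an ergodic-closing-lemma-plus-Franks argument. Step (ii) is not routine: Ma\~n\'e's ergodic closing lemma is a deep theorem whose standard form is for diffeomorphisms, and a version for non-invertible $C^1$ local diffeomorphisms with the derivative control you need is essentially the content of results like \cite[Theorem 1.3]{Ar} itself; it cannot be waved through as ``a Franks/Ma\~n\'e-type lemma plus a closing lemma.'' You correctly flag this as the main obstacle, but as written it remains an unproved step of roughly the same depth as the theorem you are proving. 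The repair is simply to invoke \cite[Theorem 1.3]{Ar} (or an equivalent star-condition result for expanding maps) at that point, which is exactly what the paper does.
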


To prove this last result, we first prove a perturbing lemma,
Lemma \ref{superlema},  that has the same flavor as \cite[Lemma (1.1)]{F}, establishing that if a map $f$ has a non expanding periodic
point $p$ then there is $\delta> 0$ such that 
$\Gamma_\delta^+(f,p)$ 
contains a manifold $S$ with $\dim(S)\geq 1$.
The proof of the theorem follows by contradiction, applying this lemma to a non expanding periodic point. 
As another consequence of Lemma \ref{superlema} 
we get a similar result to \cite[Theorem 1.1]{ALL}:

\begin{theorem}\label{colo}
There exists an open and dense subset $\mathcal{R}\subset \Diff^1_{loc}(M)$ such that if $f \in \mathcal{R}$ and $f$ is positively $\mu$-expansive, then $f$ is expanding.
\end{theorem}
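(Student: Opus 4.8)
The plan is to run the same strategy as in the proof of Theorem~\ref{teob}, the key point being that Lemma~\ref{superlema} must now be fed a \emph{non-expanding periodic point of $f$ itself} (rather than of a perturbation), so the role of $\mathcal R$ is precisely to guarantee that whenever $f$ is not expanding such a periodic point is already present. Accordingly, let $\mathcal E\subset\Diff^1_{loc}(M)$ be the set of expanding maps (it is standard that $\mathcal E$ is $C^1$-open), and let $\mathcal N\subset\Diff^1_{loc}(M)$ be the set of maps having a hyperbolic periodic point $p$, of some period $k$, such that $Df^{k}(p)$ has at least one eigenvalue of modulus strictly smaller than $1$. Since the hyperbolic continuation of $p$ persists and its eigenvalues vary continuously under $C^1$ perturbations, $\mathcal N$ is $C^1$-open as well, and I would take $\mathcal R:=\mathcal E\cup\mathcal N$, which is then $C^1$-open.

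The substantive point — and the one I expect to be the main obstacle — is the density of $\mathcal R$. Let $f\notin\mathcal E$; I must approximate $f$ in the $C^1$ topology by an element of $\mathcal N$. Recall that $f$ is expanding if and only if every $f$-invariant probability measure has all of its Lyapunov exponents positive (this follows from the subadditive ergodic theorem applied to the cocycle $n\mapsto\log\|Df^{n}(\cdot)^{-1}\|$, together with lower semicontinuity of $\mu\mapsto\lambda_{\min}(\mu)$ and compactness of the space of invariant measures); hence there is an ergodic $f$-invariant measure whose smallest exponent is $\le 0$. Ma\~{n}\'{e}'s ergodic closing lemma, in its version for $C^{1}$ local diffeomorphisms, then yields a $C^{1}$-small perturbation $g$ of $f$ possessing a periodic point $q$ such that $Dg^{k}(q)$ has an eigenvalue of modulus close to or below $1$; a further $C^{1}$-small perturbation of the derivative along the orbit of $q$ — of exactly the Franks-type nature used inside Lemma~\ref{superlema} — makes $q$ hyperbolic with an eigenvalue of modulus $<1$. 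Thus $g\in\mathcal N$, so the complement of $\mathcal E$ is contained in the closure of $\mathcal N$ and $\mathcal R$ is dense. The only delicate point here is checking that the ergodic closing lemma and Franks' lemma are indeed available for $C^{1}$ local diffeomorphisms and not merely for diffeomorphisms.

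It then remains to verify that $\mathcal R$ works, which is short. Let $f\in\mathcal R$ be positively $\mu$-expansive. If $f\in\mathcal E$ we are done. Otherwise $f\in\mathcal N$, so $f$ has a hyperbolic periodic point $p$ with a contracting eigendirection; then its local stable set $W^{s}_{\mathrm{loc}}(p)$ is a $C^{1}$ submanifold with $\dim W^{s}_{\mathrm{loc}}(p)\ge 1$, and in particular $p$ is a non-expanding periodic point of $f$. By Lemma~\ref{superlema} there is $\delta>0$ with $\Gamma^{+}_{\delta}(f,p)\supset S$ and $\dim S\ge 1$, and the same holds for every smaller $\delta$, the relevant disc being the piece of $W^{s}_{\mathrm{loc}}(p)$ along which $\sup_{n\ge 0}d(f^{n}(\cdot),f^{n}(p))\le\delta$, which is a relative neighbourhood of $p$ in $W^{s}_{\mathrm{loc}}(p)$. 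Fix a non-atomic probability measure $\mu_{0}$ supported on a $C^{1}$ arc through $p$ contained in $W^{s}_{\mathrm{loc}}(p)$; then $\mu_{0}\in\mathcal M(M)\setminus\mathcal A(M)$ and $\mu_{0}\bigl(\Gamma^{+}_{\delta}(f,p)\bigr)>0$ for every $\delta>0$, because $\Gamma^{+}_{\delta}(f,p)$ meets the support of $\mu_{0}$ in a relative neighbourhood of $p$. This contradicts $f$ being positively $\mu$-expansive, so the case $f\in\mathcal N$ is impossible and $f$ must be expanding, which proves the theorem.
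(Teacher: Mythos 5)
Your overall architecture matches the paper's: an open--dense set built from the open set of maps possessing a hyperbolic saddle, plus the observation (the paper's Corollary \ref{valesemperturbar}) that the local stable manifold of such a saddle sits inside every $\Gamma^+_\delta(f,p)$ and supports a non-atomic measure, contradicting positive measure-expansiveness. Your concluding step is correct, except that you cite Lemma \ref{superlema} for $f$ itself; that lemma only produces the invariant disc after a perturbation, and what you actually use (the piece of $W^s_{\mathrm{loc}}(p)$) is exactly the unperturbed version in Corollary \ref{valesemperturbar}, so this is only a mis-attribution.

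The genuine gap is in the density of $\mathcal R=\mathcal E\cup\mathcal N$, which you yourself flag but do not close. Your chain is: $f$ not expanding $\Rightarrow$ some ergodic invariant measure has a non-positive exponent (a Cao-type theorem) $\Rightarrow$ an ergodic closing lemma produces a nearby map with a periodic orbit whose derivative has an eigenvalue of modulus $\le 1$ $\Rightarrow$ Franks' lemma makes it a saddle. The middle step fails as stated on two counts: the ergodic closing lemma is a theorem about $C^1$ diffeomorphisms and is not an off-the-shelf tool for non-invertible local diffeomorphisms; and even granting it, closing an orbit says nothing about the eigenvalues of the resulting periodic point --- passing from ``a measure with a non-positive exponent'' to ``a periodic orbit with an eigenvalue of modulus $\le 1$'' is a substantial Ma\~n\'e-type argument. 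That combined statement is precisely \cite[Theorem 1.3]{Ar}, which is what the paper invokes; with that citation in hand your density argument does go through. The paper additionally sidesteps the need to prove density at all by taking $\mathcal R=\mathcal H\cup\overline{\mathcal H}^{\,c}$ with $\mathcal H$ the open set of maps with a hyperbolic saddle: this union is open and dense for purely topological reasons, and the perturbative input (Arbieto plus Franks) is used only inside the implication, to show that a non-expanding $f\in\mathcal R$ must lie in $\overline{\mathcal H}$ and hence in $\mathcal H$. That trick buys you the theorem without ever having to approximate an arbitrary non-expanding map by one in $\mathcal N$, which is exactly the step your proposal leaves open.
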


We can also ask if  measure expansiveness for only the \emph{invariant measures} could be enough to the map have some hyperbolicity. Next, we give a 
partial answer to this question  in the {\em{conservative}} context.
This means that the manifold is endowed with a smooth volume
form $\omega$;
then we can speak of conservative
(i.e.,  volume-preserving) diffeomorphisms. 
In order to announce this last result, 
denote by $\mathcal{D}iff^1_\omega(M)$
the set of $C^1$ conservative diffeomorphisms and to easy notation
set

$$
\SP\SI\SM=\{ f \in \D^1(M); f \, \mbox{is positively $\mu$-expansive for all}\,
\mu \in \SM_f(M)\setminus \SA(M)\}.
$$

\begin{maintheorem}\label{teod}
 Let $ f \in \mathcal{D}iff^1_\omega(M)$ and assume there is an open neighborhood 
 $\SU(f)$ such that $\SU(f) \subset  \SP\SI\SM \cup \{f\}$.
 Then $f$ has a dominated splitting.
\end{maintheorem}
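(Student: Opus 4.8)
The plan is to argue by contradiction, mirroring the strategy used for Theorem~\ref{teob} but working inside the conservative category, where the appropriate perturbation tool is Bochi--Ma\~n\'e theory rather than Franks' lemma. So suppose $f \in \mathcal{D}iff^1_\omega(M)$ has \emph{no} dominated splitting. By the $C^1$-genericity / $C^1$-openness machinery of Bochi and Ma\~n\'e (in the volume-preserving setting), the absence of any dominated splitting on $M$ forces, after an arbitrarily small $C^1$-conservative perturbation $g$ of $f$, the existence of a periodic point $p$ for $g$ whose derivative $Dg^{\pi(p)}(p)$ has all eigenvalues of the same modulus (more precisely, one can kill all the Lyapunov exponents along the periodic orbit, making $Dg^{\pi(p)}(p)$ a homothety or at least non-expanding in every direction). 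In particular $p$ is a non-expanding periodic point of $g$, and since $f$ has no dominated splitting we may take such perturbations with $g$ as close to $f$ as we like; because $\mathcal{U}(f)$ is open and $\mathcal{U}(f)\subset \SP\SI\SM\cup\{f\}$, we can choose $g \in \mathcal{U}(f)\cap \mathcal{D}iff^1_\omega(M)$ with $g \neq f$, hence $g \in \SP\SI\SM$.

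The next step is to invoke the perturbing Lemma~\ref{superlema}. Since $g$ has a non-expanding periodic point $p$, the lemma produces (possibly after one more arbitrarily small $C^1$-perturbation $h$ of $g$, which we must keep inside the conservative class and inside $\mathcal{U}(f)$) a constant $\delta>0$ and a submanifold $S\subset \Gamma^+_\delta(h,p)$ with $\dim S \geq 1$. Here I must check that Lemma~\ref{superlema}, as proved earlier, can be realized by a volume-preserving perturbation; this is the point where some care is needed, because the local model used to create the invariant manifold tangent to the non-expanding direction should be chosen to preserve $\omega$ (one works in Moser-normalized local coordinates and uses conservative local perturbations supported near $p$). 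Granting this, $S$ has positive Hausdorff dimension, so there exists a non-atomic Borel probability measure $\nu$ supported on $S$ — for instance normalized Hausdorff measure of the appropriate dimension, or simply push forward Lebesgue measure on a small disc inside $S$ — and $\nu \in \mathcal{M}(M)\setminus \mathcal{A}(M)$ with $\nu(\Gamma^+_\delta(h,p))=\nu(S)=1$. But wait: Theorem~\ref{teod} only gives us expansiveness for \emph{invariant} measures in $\SM_h(M)$, not for all non-atomic measures, so this $\nu$ need not be $h$-invariant and does not immediately contradict $h\in\SP\SI\SM$.

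To repair this, I would produce an \emph{invariant} non-atomic measure whose mass lies in $\Gamma^+_{\delta'}(h,x)$ for some $x$. The natural candidates are the measures supported on the periodic orbit of $p$ and on nearby orbits: since $h$ fixes (a finite iterate of) $p$ and the manifold $S$ can be arranged to be invariant (or forward-invariant with controlled return) under that iterate, one can look at the dynamics of $h^{\pi(p)}$ restricted to a neighborhood of $p$ in $S$. If this restricted map carries an invariant non-atomic measure $\eta$ with $\supp\eta\subset \Gamma^+_\delta(h,p)$, then spreading $\eta$ over the period gives an $h$-invariant non-atomic measure concentrated on $\Gamma^+_{\delta}(h,p)$, contradicting $h\in\SP\SI\SM$. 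Alternatively, and perhaps more robustly, one applies Lemma~\ref{superlema} not just to get a manifold but to get a \emph{horseshoe-like} or \emph{normally non-expanding invariant set} through $p$ after perturbation, which by Kryloff--Bogoliouboff carries an invariant measure, and one checks this measure is non-atomic (e.g. by building the invariant set with a Cantor structure of positive dimension transverse to the orbit, so it cannot be a finite union of periodic orbits).

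The main obstacle, I expect, is precisely this last point: upgrading the \emph{a priori} non-invariant manifold $S\subset\Gamma^+_\delta$ coming from Lemma~\ref{superlema} to a genuinely $h$-invariant non-atomic measure inside a $\Gamma^+$ set, all while staying in the volume-preserving class $\mathcal{D}iff^1_\omega(M)$ and inside the open set $\mathcal{U}(f)$. The conservative constraint is the subtle part — Bochi--Ma\~n\'e guarantees a non-expanding periodic point, but turning the associated center direction into an invariant set supporting a diffuse invariant measure requires a conservative version of the linearization/perturbation in Lemma~\ref{superlema}, and one must verify that the perturbations used there (which in the dissipative case have no reason to preserve $\omega$) admit volume-preserving analogues with the same conclusion. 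Once that is in hand, the contradiction closes and $f$ must admit a dominated splitting.
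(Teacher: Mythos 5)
Your overall strategy (contradiction, conservative perturbation producing a degenerate periodic orbit, then the Main Lemma) is the paper's strategy, but you leave open exactly the step the paper's argument is built to handle, so as written there is a genuine gap. The tool the paper uses is not a generic ``kill the exponents'' statement but \cite[Theo.~6]{BDP}: for a volume-preserving $f$ with no dominated splitting, an arbitrarily small conservative perturbation $g$ has a periodic point $p$ with $Dg^{\tau(p)}(p)=\mathrm{Id}$ \emph{exactly}. This is precisely the hypothesis of item (2) of Lemma~\ref{superlema}. After the Franks-type linearization, $h^{\tau(p)}$ becomes the identity on a whole neighborhood $V$ of $p$, so every point of $V$ is periodic; taking any non-atomic measure on $V'=\Gamma^+_{\delta'}(h,p)$ and averaging it over the $\tau(p)$ iterates produces an $h$-\emph{invariant} non-atomic measure $\mu_h$ with $\mu_h(\mathcal I_p)>0$, which is the contradiction with $h\in\SP\SI\SM$. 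Your version only secures a ``non-expanding in every direction'' periodic point, which is too weak to run item (2); the repairs you sketch (Krylov--Bogolyubov on an invariant set inside $S$, horseshoe-like constructions) are exactly the missing content and are not carried out. You yourself flag this as ``the main obstacle,'' and it is.

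Your second worry --- that the Franks-type perturbation $h$ must be volume-preserving --- is a red herring under the paper's reading of the hypothesis: $\SP\SI\SM$ is defined inside $\D^1(M)$ and $\SU(f)$ is an open set of $C^1$ diffeomorphisms containing $f$, so the only perturbation that must stay conservative is the first one ($g$, supplied by \cite{BDP}); the subsequent $h$ merely needs to lie in $\SU(f)$, which it does by $C^1$-proximity to $g$. So the conservative-Franks issue you anticipate does not arise, while the issue you correctly identify (producing an invariant non-atomic measure) is resolved only by the $Dg^{\tau(p)}(p)=\mathrm{Id}$ conclusion of \cite{BDP} combined with item (2) of Lemma~\ref{superlema}.
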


This text is organized as follows.
In Section \ref{s.preliminar} we give the basic definitions and recall previous results proved elsewhere that will be used
to obtain our main theorems. 
In Section \ref{s.teoa} we study positive measure-expansive maps and give a sufficient condition
to a measure-expansive map to be  expanding and prove Theorem~\ref{teoa}.
In Section \ref{s.teob} we give a necessary condition to a local diffeomorphism to be measure-expansive and prove Theorem \ref{teob} and Theorem \ref{colo}.
Finally in Section \ref{s.teod} we prove Theorem \ref{teod}.

{\bf{Acknowledgements.}}\/ The authors thank A. Arbieto for helpful conversations in this subject, and to the anonymous referee by his
comments and suggestions that certainly improved this text.

%%%%%%%%%%%%%%%%%%%%%%%%%%%%%%%%%%%%%%%%%%%%%%%%%%%%%%%%%%%%%%%%%%%

\section{Preliminaries}\label{s.preliminar}

In this section we set the notation and recall some definitions and results proved else where that we shall use to obtain our results. 
To this  let $M$ be a compact boundaryless $n$-dimensional Riemmanian manifold, $n \geq2$. As above $\Diff^1_{loc}(M)$ 
denotes the set of $C^1$
local diffeomorphisms on $M$ endowed with the $C^1$-topology. 
Denote by $d$ the distance on $M$ induced by the 
Riemannian metric $||\cdot||$ on the tangent bundle $TM$.

Let $f\in \Diff^1_{loc}(M)$
and $p\in M$. Recall that $p$ is a \emph{periodic} point if $f^n(p)=p$ for some $n\geq 1$.  The minimal number $n$ such that $f^n(p)=p$ is the 
\emph{period} of $p$ and is denoted by  $\tau(p)$. 
Given a periodic point $p$ with period $\tau(p)$ we denote by $\SO(p)$ the orbit of $p$, i. e., 
$\SO(p)=\{p,f(p), \ldots, f^{\tau(p)-1}(p)\}.$
 A periodic point $p$ is \emph{hyperbolic} if 
the eigenvalues of $Df^{\tau(p)}(p)$ do not belong to the unit circle $S^1$ and 
is \emph{expanding} if all the eigenvalues of $Df^{\tau(p)}(p)$ have absolute value greater than one.

The stable manifold of $x\in M$, $W^s(x)$,  is defined as 
$$W^s(x)=\{y | d(f^n(y),f^n(x))\rightarrow 0, n \in \N\}.$$

The unstable manifold of $x$, $W^u(x)$, is defined as
 $$W^u(x)=\{y \,|\, d(f^{-n}(y),f^{-n}(x))\rightarrow 0, n \in \N\}.$$

A \emph{saddle point} is a hyperbolic periodic point whose stable and unstable manifolds have a positive dimension.

A compact invariant set $\La$  of a diffeomorphism $f$ is \emph{hyperbolic} if there is a $Df$-
invariant continuous splitting $T_{\Lambda}M=E^s\oplus E^u$ and constants $C > 0$ and $\kappa < 1$ such that for every $x \in \La$ and $n \in \N$
it holds
$$||Df^{-n}(x)_{|E^u_x}||\leq C\kappa^n\mbox{ and }||Df^n(x)_{|E^s_x}||\leq  C\kappa^n.$$ 

A compact $f$-invariant set $\La \subset M$ admits a \emph{dominated splitting}
if the tangent bundle $T_\La M$ has a continuous $Df$-invariant splitting $E_1\oplus \cdots \oplus E_k$ and there exist constants  
$C > 0$,  $0 < \lambda < 1$, such that for all $i<j$,
$\forall x\in\Lambda$ and $n\geq 0$ it holds
$$||Df^n|{E_i(x)}||\cdot ||Df^{-n}|E_j(f^n(x))||\leq C\lambda^n.$$

Let $C^1(M)$ be the set of $C^1$ maps $f: M \to M$, endowed with the $C^1$-topology.
A subset $\mathcal R\subset C^1(M)$ is a \emph{residual subset} if contains a countable intersection of open and 
dense sets.
The countable intersection of residual subsets is also a residual subset.

A property (P) holds \emph{generically} if there exists a residual subset $\mathcal R\subset C^1(M)$ such that any $f\in \mathcal R$
 has the property (P).

We finish this section stating  a lemma, due to V. I. Pliss, whose proof can be found in
\cite[Lemma 11.8, pp 276]{Man}.

\begin{lemma}\label{pliss}
Given $A \geq c_2 > c_1 > 0$, let $\theta_0 = \displaystyle\frac{(c_2 - c_1 )}{(A - c_1 )}$. Then,
given any real numbers $a_1,\ldots , a_N$ such that 
$$\sum^N_{j=1} a_j \geq c_2N \hbox{ and } a_j\leq A \hbox{ for every } 1 \leq j \leq N,$$

there are $l > \theta_0 N$ and $1 < n_1 <\ldots < n_l \leq N$ so that

$$\sum^{n_i}_{j=n+1} a_j \geq c_1(n_i-n) \,\ \hbox{for every} \,\ 1 \leq n \leq n_i\,\ \hbox{and} \,\ i=1,\ldots, l.$$
 
\end{lemma}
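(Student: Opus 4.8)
The plan is to reduce the statement to a purely combinatorial property of the partial sums of the shifted sequence $a_j - c_1$, and then to count the record indices by a slope-bounded telescoping argument.

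First I would set $b_j = a_j - c_1$ and introduce the partial sums $\sigma_0 = 0$ and $\sigma_k = \sum_{j=1}^k b_j$ for $1 \le k \le N$. The point of this normalization is that the conclusion becomes a statement purely about the $\sigma_k$: for a fixed index $m$, the inequality $\sum_{j=n+1}^m a_j \ge c_1(m-n)$ is equivalent to $\sigma_m - \sigma_n \ge 0$, so demanding it for every $n < m$ is exactly the requirement that $\sigma_m = \max_{0 \le n \le m}\sigma_n$, i.e.\ that $m$ is a (weak) \emph{record} of the sequence $\sigma_0, \sigma_1, \dots$. Thus the indices $n_1 < \cdots < n_l$ we must produce are precisely the record times of $(\sigma_k)$, and the whole statement reduces to showing there are more than $\theta_0 N$ of them.

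To count them I would track the running maximum $\mu_k = \max_{0 \le n \le k}\sigma_n$, which is nondecreasing, starts at $\mu_0 = 0$, and increases only at record times. Two estimates then pin down $l$. On one hand, the hypothesis $\sum_{j=1}^N a_j \ge c_2 N$ gives $\mu_N \ge \sigma_N = \sum_{j=1}^N (a_j - c_1) \ge (c_2 - c_1)N$. On the other hand, at a record time $n_i$ the jump of the running maximum is controlled by the single new term: since $\mu_{n_i - 1} \ge \sigma_{n_i - 1} = \sigma_{n_i} - (a_{n_i} - c_1)$, we get $\mu_{n_i} - \mu_{n_i - 1} = \sigma_{n_i} - \mu_{n_i - 1} \le a_{n_i} - c_1 \le A - c_1$, where the last step is the only place the bound $a_j \le A$ enters. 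Because $\mu$ changes only at the $l$ record times, all other terms in $\mu_N = \sum_{k=1}^N(\mu_k - \mu_{k-1})$ vanish, so telescoping yields $\mu_N \le l(A - c_1)$. Combining the two estimates gives $(c_2 - c_1)N \le l(A - c_1)$, that is $l \ge \theta_0 N$.

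The one delicate point, and the step I expect to need the most care, is upgrading this to the strict inequality $l > \theta_0 N$ demanded by the statement. This is a boundary/equality analysis: equality $l = \theta_0 N$ (only possible when $\theta_0 N \in \mathbb{Z}$) would force every record jump to equal exactly $A - c_1$, hence $a_{n_i} = A$ at every record time together with $\sigma_N = \mu_N = (c_2 - c_1)N$; feeding these rigidities back into the constraints $\sum_j a_j \ge c_2 N$ and $a_j \le A$ at the non-record indices produces a contradiction, so the inequality must be strict. I would also keep careful track of the index conventions (whether the comparison runs over $0 \le n < m$ or $1 \le n \le m$, with the matching normalization $\sigma_0 = 0$), since this is exactly what makes the reformulation in the first step precise and is the only genuinely fiddly bookkeeping in the argument.
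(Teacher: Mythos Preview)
The paper does not prove this lemma; it merely states it and refers to \cite{Man} for a proof, so there is no argument in the paper to compare yours against. Your approach---normalize to $b_j = a_j - c_1$, identify the $n_i$ as the record times of the partial sums $\sigma_k=\sum_{j\le k}b_j$, and bound the number of records via the telescoping estimate $(c_2-c_1)N \le \sigma_N \le \mu_N \le l(A-c_1)$---is exactly the standard proof one finds in Ma\~n\'e's book and elsewhere, and it is correct up to the non-strict conclusion $l\ge\theta_0 N$.

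The one place your sketch is not quite right is the upgrade to the strict inequality. If $l=\theta_0 N$ held, both inequalities above would be equalities; in particular every record jump would equal $A-c_1$, which forces $a_{n_i}=A$ and $\mu_{n_i-1}=\sigma_{n_i-1}$, so $n_i-1$ is itself a record. Unwinding, every index $1,\dots,N$ is a record with $a_j=A$, whence $\sigma_N=(A-c_1)N$; combined with $\sigma_N=(c_2-c_1)N$ this is a contradiction \emph{only} when $c_2<A$. In the boundary case $A=c_2$ (allowed by the hypotheses $A\ge c_2$) one has $\theta_0=1$, $a_j\equiv A$, and $l=N=\theta_0 N$, so the strict inequality can genuinely fail. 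The lemma is usually stated with $l\ge\theta_0 N$, and that is all the application in Section~\ref{s.teoa} actually uses. Your instinct to flag the index conventions is also well placed: the paper's ``$1<n_1$'' and ``$1\le n\le n_i$'' are slightly off from the natural record-time range $0\le n<n_i$, but this is cosmetic and does not affect the argument.
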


\section{Proof of Theorem \ref{teoa}}\label{s.teoa}

In this section we prove Theorem \ref{teoa}. For this, first recall
that a map
$f \in \Diff_{loc}^1(M)$ 
is \emph{asymptotically $c$-expanding} at $x\in M$ if (see \cite[pp 1314]{O})
\begin{equation}\label{condition2}
 \limsup_{n\rightarrow \infty} \frac{1}{n} \sum_{i=0}^{n-1} \log \|Df(f^i(x))^{-1}\|^{-1} > 4c\,.
 \end{equation}
 
 Now, let $\mu$ be an invariant ergodic probability measure such that all of its Lyapunov exponents are positive. 
By  \cite[Lemma 3.5]{O}  there exist $c>0$ and $l\in \N$ such that 
\begin{equation}
\int_{M} \frac{1}{l}\log( \|Df^l(x)^{-1}\|)d\mu <-4c<0.
\label{krerley}
\end{equation}
For every  $l \in \N$ define
the set
$$\mathcal{J}_l:=\{x\in M:\hbox{$f^l$ is assymptotically $c$-expanding at $x$}\}.$$
\begin{Claim}\label{claim1}
The set $\mathcal{J}_l$ has total measure with respect to $\mu$. 
\end{Claim}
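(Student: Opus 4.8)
The goal is to show that $\mathcal{J}_l$, the set of points where $f^l$ is asymptotically $c$-expanding, has full $\mu$-measure. Since $\mu$ is $f$-invariant and ergodic, it is $f^l$-invariant (though possibly not $f^l$-ergodic if the ergodic components of $f^l$ are permuted by $f$), and by \eqref{krerley} we control the integral of $\frac{1}{l}\log\|Df^l(x)^{-1}\|$. The natural tool is the Birkhoff ergodic theorem applied to the observable $\varphi(x) := \frac{1}{l}\log\|Df^l(x)^{-1}\|^{-1} = -\frac{1}{l}\log\|Df^l(x)^{-1}\|$, which is continuous hence $\mu$-integrable on the compact manifold $M$.

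First I would set up the dynamics carefully: the map we iterate is $F := f^l$, and the observable for which we want a Birkhoff average is $\Phi(x) := \log\|DF(x)^{-1}\|^{-1}$, so that asymptotic $c$-expanding at $x$ means $\limsup_{n\to\infty}\frac1n\sum_{i=0}^{n-1}\Phi(F^i(x)) > 4c$. Note $\Phi = l\,\varphi$ in the notation above, and $\int_M \Phi\, d\mu = -l\int_M \frac1l\log\|Df^l(x)^{-1}\|\,d\mu > 4cl > 4c$ by \eqref{krerley} — wait, I should be a bit careful with the factor $l$; the cleaner route is to observe $\int_M \Phi\,d\mu = \int_M -\log\|Df^l(x)^{-1}\|\,d\mu = -l\int_M \frac1l\log\|Df^l(x)^{-1}\|\,d\mu > 4cl \ge 4c$, so in any case the integral strictly exceeds $4c$. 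Then I would apply Birkhoff's theorem to $(M, F, \mu)$: for $\mu$-a.e.\ $x$ the limit $\widetilde{\Phi}(x) = \lim_{n\to\infty}\frac1n\sum_{i=0}^{n-1}\Phi(F^i(x))$ exists, is $F$-invariant, and satisfies $\int_M \widetilde{\Phi}\,d\mu = \int_M \Phi\,d\mu > 4c$.

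The remaining point is to pass from "the integral of $\widetilde\Phi$ exceeds $4c$" to "$\widetilde\Phi(x) > 4c$ for $\mu$-a.e.\ $x$," which is where $f$-ergodicity (not just $f^l$-invariance) enters. Here I would use that $\mu$ is ergodic for $f$: the function $\widetilde\Phi$ is $F=f^l$-invariant but perhaps not $f$-invariant. However, consider $\Psi(x) := \frac1l\sum_{j=0}^{l-1}\widetilde\Phi(f^j(x))$. Since $\widetilde\Phi\circ f^l = \widetilde\Phi$, one checks $\Psi\circ f = \Psi$, so $\Psi$ is $f$-invariant, hence $\mu$-a.e.\ constant by ergodicity of $\mu$ under $f$, equal to $\int_M \Psi\,d\mu = \int_M \widetilde\Phi\,d\mu > 4c$ (using $f$-invariance of $\mu$ to see each term $\int \widetilde\Phi\circ f^j\,d\mu = \int\widetilde\Phi\,d\mu$). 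So $\Psi \equiv C > 4c$ a.e. This alone does not immediately give $\widetilde\Phi(x) > 4c$ pointwise, since $\Psi$ is an average over the $f$-orbit of length $l$; but the definition of asymptotic $c$-expanding for $F=f^l$ at $x$ uses the $\limsup$ of Birkhoff sums of $\Phi$ along the $F$-orbit, and $\Psi(x) = C$ says precisely that the average over $j=0,\dots,l-1$ of the quantities $\widetilde\Phi(f^j(x))$ equals $C > 4c$; hence for $\mu$-a.e.\ $x$ there is at least one $j$ with $\widetilde\Phi(f^j(x)) \ge C > 4c$. At this stage I expect the cleanest fix is to note that the statement of the Claim, and its use downstream, only needs: for $\mu$-a.e.\ $x$, $f^l$ is asymptotically $c$-expanding \emph{somewhere on the $f$-orbit of $x$}, or — better — to just invoke that the ergodic components of $F=f^l$ are permuted cyclically by $f$ into $k \mid l$ classes each of equal $\mu$-mass, on each of which $\widetilde\Phi$ is a.e.\ constant, and these constants average (with multiplicity) to $\int\widetilde\Phi\,d\mu > 4c$; combined with the fact that $f$ maps one component to another isometrically for the relevant quantity, all these constants coincide after the averaging forced by $f$-ergodicity, giving $\widetilde\Phi \equiv C > 4c$ $\mu$-a.e.

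The main obstacle, then, is precisely this bookkeeping between $f$-ergodicity and $f^l$-invariance: one must rule out that $\widetilde\Phi$ takes a value $\le 4c$ on a positive-measure $f^l$-ergodic component whose "deficit" is compensated by another component where $\widetilde\Phi$ is large. The resolution is that $f$ sends one such component to another and $\mu$ is $f$-invariant, so all components in a single $f$-orbit carry the same value of (the $f^l$-invariant) $\widetilde\Phi$ — more precisely $\widetilde\Phi\circ f$ is again $f^l$-invariant with the same integral and, being constant on each component permuted cyclically, forces the constants to be equal — and $f$-ergodicity then makes that common value equal to $\int\widetilde\Phi\,d\mu > 4c$ globally. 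Hence $\widetilde\Phi(x) > 4c$ for $\mu$-a.e.\ $x$, i.e.\ $\mu(\mathcal{J}_l) = 1$, which is the assertion of the Claim.
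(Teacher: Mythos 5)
Your overall strategy --- apply Birkhoff's theorem to $F=f^{l}$ and the observable $\Phi=\log\|Df^{l}(\cdot)^{-1}\|^{-1}$, note that $\int_M\Phi\,d\mu>4cl\ge 4c$ by (\ref{krerley}), and then try to upgrade this inequality on the integral of the limit function $\widetilde\Phi$ to a pointwise inequality $\mu$-a.e. --- is the same basic mechanism as the paper's. The paper argues by contradiction: $\mathcal J_l$ is $f^{l}$-invariant, so (it asserts) ergodicity forces $\mu(\mathcal J_l)\in\{0,1\}$, and $\mu(\mathcal J_l)=0$ would make the integral of the Birkhoff limit at most $4c$, contradicting (\ref{krerley}). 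You have correctly isolated the point at which both arguments are delicate, namely that $\mu$ is ergodic for $f$ while the relevant set and limit function are only $f^{l}$-invariant.

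However, your resolution of that point is not a proof. The function $\widetilde\Phi$ is a.e.\ constant on each $f^{l}$-ergodic component $A_0,\dots,A_{k-1}$ (with $k\mid l$ and $f(A_j)=A_{j+1\bmod k}$), with value $C_j=\int\Phi\,d\mu_j$, where $\mu_j$ is the normalized restriction of $\mu$ to $A_j$. Your assertion that ``$f$ maps one component to another isometrically for the relevant quantity, [so] all these constants coincide'' is unjustified and in general false: since $f_{*}\mu_j=\mu_{j+1}$ one has $C_{j+1}=\int\Phi\circ f\,d\mu_j$, and $\Phi\circ f\ne\Phi$ because the observable $\log\|Df^{l}(\cdot)^{-1}\|^{-1}$ is not $f$-invariant; nothing forces $C_{j+1}=C_j$. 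Your $\Psi$-argument is correct as far as it goes, but it yields only that the average of the $C_j$'s exceeds $4c$, i.e.\ that on each cycle of components at least one $C_j>4c$ --- exactly the insufficient conclusion you yourself flagged as the ``main obstacle.'' So the final step ``hence $\widetilde\Phi>4c$ $\mu$-a.e.'' is a genuine gap. (Note that the paper's own dichotomy ``$f^{l}$-invariant plus $f$-ergodic implies measure $0$ or $1$'' is open to the same objection, so the gap cannot be closed simply by reverting to the paper's phrasing; some additional input is needed, e.g.\ a bound on how much the componentwise averages of $\Phi$ can differ, or a more careful choice of the pair $(c,l)$ in (\ref{krerley}).)
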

In fact,  since $\mathcal J_l$ is $f^l$-invariant and  $\mu$  is ergodic then  the measure of $\mathcal J_l$ is null or total. By contradiction, 
assume that the measure of $\mathcal J_l$ is null.
By Birkhoff's Theorem, the limit in (\ref{condition2})
defines a  measurable and integrable map $\varphi$ that satisfies
 $$\int_M \varphi d\mu= \int_M \log( \|Df^l(x)^{-1}\|^{-1})d \mu.$$
So, $$\int_M \log( \|Df^l(x)^{-1}\|^{-1})d\mu=\int_M \varphi d \mu=\int_{\mathcal J_l} \varphi d\mu + \int_{M \setminus \mathcal J_l} \varphi 
d\mu \, .$$
Since we are assuming $\mu(\mathcal J_l)=0$ we have $\int_{\mathcal J_l} \varphi d\mu=0$
 and thus
$$\int_M \log( \|Df^l(x)^{-1}\|^{-1})d\mu = \int_{M\setminus \mathcal J_l} \varphi d\mu\geq -4c\, .$$
So, by (\ref{krerley}), we get
$$\int_{M} \log( \|Df^l(x)^{-1}\|)d\mu <-4lc \leq -4c\leq\int_{M} \log( \|Df^l(x)^{-1}\|)d\mu\,,$$
a contradiction. Thus the measure of $\mathcal J_l$ is total and finishes the proof of Claim \ref{claim1}.

$\hfill\square$

Now, define $g:=f^l$, since $g$ is a $C^1$-local diffeomorphism, there exists an open cover 
$\{V_i\}_{i \in \Lambda}$ of $M$ such that $g|_{V_i}:V_i \rightarrow g(V_i)$ is a diffeomorphism. 
We can assume that these sets $V_i$ are connected, and by compactness of $M$ there is $\delta'>0$ such that
\begin{equation}\label{numerodelebesgue}
\hbox{$\dist(\xi, \eta)< \delta'$ implies that $\xi, \eta \in V_i$ for some $i \in\Lambda$}.
\end{equation}
Moreover, by uniform continuity there exists $\widehat{\delta}>0$ such that
\begin{equation}
\dist(x,y) < \widehat{\delta} \quad \textrm{ implies that }\quad\frac{\|Dg(x)^{-1}\|}{\|Dg(y)^{-1}\|}>e^{-c/2}.
\label{uniforme}
\end{equation}
Fix $\delta=\min\{\widehat{\delta}, \delta'\}$ and consider 
$\Gamma_\delta^+(f,x)$. 
Then, if $y \in \Gamma_\delta^+(f,x)$ we have that
$$y \in \bigcap_{j \in \N} f^{-j}(B(f^j(x),\delta)) \subset \bigcap_{k \in \N} g^{-k}(B(g^k(x), \delta)).$$
Hence, 
\begin{equation}\label{e.distancia}
\dist(g^k(x),g^k(y))< \delta, \,\ \forall k \in \N.
\end{equation}

Pick a point $x \in M$ satisfying condition (\ref{condition2}) for $g$. Then
$$\limsup_{n\rightarrow \infty} \frac{1}{n} \sum_{i=0}^{n-1}\log \|Dg(g^i(x))^{-1}\|^{-1} > 4c.$$

For $N \in \N$ sufficiently large we have 
\begin{equation}\label{NPliss}
\frac{1}{N} \sum_{i=1}^{N}\log \|Dg(g^i(x))^{-1}\|^{-1} > 4c.
\end{equation}
Applying  Lemma \ref{pliss}  with 
$$A=\displaystyle{\max_{\xi \in M} \log\|Dg(\xi)^{-1}\|^{-1}},\,\ c_2=4c, \,\ c_1=2c,\,\,\, \mbox{and} \,\,\,\theta_0=\frac{2c}{A-2c},$$ 
we obtain that there are $l \in \N$, with $l>\theta_0N,$ and $1<n_1<\ldots < n_l<N$, with $N$  as in (\ref{NPliss}), such that
$$ \sum_{j=n+1}^{n_i}\log \|Dg(g^j(x))^{-1}\|^{-1} > (n_i-n)2c
\,\ \hbox{for every} \,\ 1 \leq n \leq n_i\,\ \hbox{and} \,\ i=1,\ldots, l.$$

Moreover, for all $z$ satisfying $d(z,g^{j}(x))<\delta$, for all $j\in \N$,  (\ref{uniforme}) implies that
$$\frac{\|Dg(g^{j}(x))^{-1}\|}{\|Dg(z)^{-1}\|}>e^{\frac{-c}{2}}.$$
Then 
\begin{equation}
\|Dg(z)^{-1}\|<e^{\frac{c}{2}}\|Dg(g^{j}(x))^{-1}\|.
\label{norma}
\end{equation}
\\
By connectedness, (\ref{numerodelebesgue}) and the Mean Value Theorem applied to a inverse branch $g^{-1}$ 
which sends $g^{n+1}(x)$ to $g^n(x)$, for  $n=0,\ldots,n_i$,we get
\begin{eqnarray*}
\dist(g^{n}(x),g^{n}(y)) & = & \dist(g^{-1}(g^{n+1}(x)),g^{-1}(g^{n+1}(y)))\\
& \leq & \|Dg(z_n)^{-1}\|\dist(g^{n+1}(x),g^{n+1}(y)),
\end{eqnarray*}
with  $\dist(z_n,g^{n+1}(x))<\delta$. Thus, by (\ref{norma}) we get
\begin{eqnarray*}
 \dist(x,y) & \leq & \prod_{n=1}^{n_i}\|Dg(g^n(x))^{-1}\|e^{\frac{c}{2}n_i}\delta \\
 & < &\frac{e^{-2c(n_i-1)}}{\|Dg(g(x))^{-1}\|}e^{\frac{c}{2}n_i}\delta \\
 & < & \frac{e^{\frac{-3}{2}c(n_i-\frac{4}{3})}}{\|Dg(g(x))^{-1}\|}\delta.
\end{eqnarray*}

Since $N\theta_0<l<n_l$ we obtain that
$$d(x,y) <\frac{e^{\frac{-3}{2}c(N\theta_0-\frac{4}{3})}}{\|Dg(g(x))^{-1}\|}\delta.$$

As $N$ can be chosen arbitrarily large, we get $\dist(x,y)=0$. Thus $\Gamma_\delta^+(g,x)=\{x\}$ for $\mu$-almost every $x \in M$. Therefore $\mu$ is  positively expansive. This completes the proof of Theorem \ref{teoa}.

$\hfill\square$

\section{Expanding properties and proof of Theorem \ref{teob}}\label{s.teob}

We start observing that a necessary condition to a local diffeomorphism $f$
be positively measure expansive is that for all $\delta > 0$ and all $x \in M$, 
$\Gamma^+_{\delta}(f,x)$ doesn't contain any manifold of dimension greater than or equal to one. 

In fact, assume that $\Gamma^+_{\delta}(f,x)$ contains a manifold $S$, $\dime(S)\geq 1$. 
Let $m$ the Lebesgue measure on $S$ and $\nu$ the  normalized Lebesgue measure on $S$, that is, for any Borel set $A \subset S$, 
$$\nu(A)= \dfrac{m(A)}{m (S)}.$$

Define a measure $\mu$ on $M$ in the following way: for any Borel set $C$ of $M$ we set 
$$\mu(C)=\nu(C \cap S).$$
Clearly $\mu$ is non-atomic and $ \mu(\Gamma^+_{\delta}(f,x)) \geq \mu(S)=1>0$, which implies that $f$ is not positively  measure-expansive.
$\hfill\square$

\subsection{Main Lemma}
Recall that a periodic point $p$ of $f$ with period $\tau(p)$ is \emph{expanding} if all the eigenvalues of $Df^{\tau(p)}(p)$ have absolute value greater than one. The next lemma shows that if there is a non expanding  periodic point $ p $ 
then there is $\delta >0$ such that $\Gamma_\delta^+(f,p)$ contains 
a manifold $S$ with $\dime(S)\geq 1$.

\begin{mainlemma}[Main Lemma]
Let $g \in \Diff^1_{loc}(M)$, $p\in\Per^{\tau(p)}(g)$ such that $Dg^{\tau(p)}(p)$ has at least one eigenvalue $\lambda$ with $|\lambda| \leq 1$, 
and let $\delta >0$. Then there are $h\in \Diff^1_{loc}(M)$ $C^1$-closed to $g$ such that $h =g$ in $\SO(p)$ and a $h^{\tau(p)}$-invariant manifold $\mathcal{I}_p \ni p$ such that
\begin{enumerate}
\item $\mathcal I_p\subset \Gamma_{\delta}^+(h,p)$.
\item If $Dg^{\tau(p)}(p)=id_{T_pM}$, there exists $\mu_h \in \mathcal M_h(M)$ such that $\mu_h(\mathcal I_p)>0$.
\end{enumerate}\label{superlema}
\end{mainlemma}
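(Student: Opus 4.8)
The plan is to construct the perturbation $h$ by working in local coordinates near the periodic orbit $\SO(p)$, so that $h^{\tau(p)}$ near $p$ becomes, up to a $C^1$-small error, a linear map whose restriction to a suitable invariant subspace is an isometry (or a contraction). Concretely, let $L=Dg^{\tau(p)}(p)$ and pick an eigenvalue $\lambda$ with $|\lambda|\le 1$; choose a $DL$-invariant subspace $F\subset T_pM$ of dimension at least one on which $L$ is non-expanding (if $\lambda$ is real take the $1$-dimensional eigenspace; if $\lambda$ is complex take the $2$-dimensional real invariant plane). Using a chart $\varphi$ centered at $p$ (and charts at the other points of the orbit, chosen with $h=g$ on the orbit so the orbit itself is not moved), I would modify $g$ only on a tiny ball around $p$ so that in these coordinates $h^{\tau(p)}$ coincides with its linearization $L$ on an even smaller ball $B_\rho(0)$; the size of the modification in $C^1$ is controlled by the modulus of continuity of $Dg$, hence can be made as small as desired. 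This is exactly the local-linearization device of Franks-type lemmas (the analogue of \cite[Lemma (1.1)]{F}).

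Next I would produce the invariant manifold $\mathcal I_p$. After the linearization, $h^{\tau(p)}$ restricted to $B_\rho(0)$ acts as $L$; since $L|_F$ has all eigenvalues of modulus $\le 1$, the forward $L$-orbit of any sufficiently small vector $v\in F$ stays in a ball of radius comparable to $\|v\|$ — in the isometry case ($|\lambda|=1$ with $L|_F$ semisimple) the norm is literally preserved, and in general one can pass to an adapted norm in which $\|L|_F\|\le 1$ up to a factor that does not blow up, or shrink $F$ further to a genuine eigendirection. Taking $\mathcal I_p=\varphi^{-1}(F\cap B_r(0))$ for $r$ small enough, every point $y\in\mathcal I_p$ has $h^{n\tau(p)}(y)$ remaining within distance $\delta$ of $p$ for all $n$; interpolating over the finitely many intermediate iterates (using that $h=g$ is a fixed local diffeomorphism, hence uniformly continuous, along the orbit) gives $d(h^k(y),h^k(p))\le\delta$ for all $k\in\N$, i.e. $\mathcal I_p\subset\Gamma^+_\delta(h,p)$, which is item (1). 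Here one must be careful: $\delta$ is given in advance, so $r$ (and $\rho$) must be chosen depending on $\delta$ and on the uniform continuity constants of $g$ along $\SO(p)$; this is routine but is where the quantifiers have to be handled correctly.

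For item (2), if $Dg^{\tau(p)}(p)=\mathrm{id}_{T_pM}$, then after linearization $h^{\tau(p)}$ is the identity on the whole ball $B_\rho(0)$, so in particular the submanifold $\mathcal I_p$ (now any small disk through $p$, of full or any chosen dimension $\ge 1$) is pointwise fixed by $h^{\tau(p)}$. Then $\mathcal I_p$ is a compact $h^{\tau(p)}$-invariant set, and normalized Lebesgue (Riemannian) measure on $\mathcal I_p$ is $h^{\tau(p)}$-invariant; averaging its pushforwards under $h,\dots,h^{\tau(p)-1}$, i.e. setting $\mu_h=\frac1{\tau(p)}\sum_{j=0}^{\tau(p)-1}(h^j)_*(\mathrm{Leb}_{\mathcal I_p}/\mathrm{Leb}(\mathcal I_p))$, produces an $h$-invariant Borel probability measure with $\mu_h(\mathcal I_p)\ge \frac1{\tau(p)}\,\mathrm{Leb}_{\mathcal I_p}(\mathcal I_p)/\mathrm{Leb}(\mathcal I_p)=\frac1{\tau(p)}>0$. (One should check $\mathcal I_p$ can be taken so that the other points $h^j(p)$ are not in it, or simply accept the possibly smaller but still positive value of $\mu_h(\mathcal I_p)$.)

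The main obstacle I anticipate is the linearization step: making precise that one can $C^1$-perturb $g$ so that $h^{\tau(p)}$ is exactly linear on a neighborhood of $p$ while keeping $h=g$ on $\SO(p)$ and keeping the $C^1$-distance small, and simultaneously controlling the neighborhood sizes so that the resulting invariant disk stays inside $\Gamma^+_\delta$ for the prescribed $\delta$. All of this is standard Franks-lemma technology, but the bookkeeping of the constants ($\delta$ given first, then $\rho$, $r$, the perturbation size, the adapted norm on the non-expanding subspace) must be arranged in the right order, and the non-semisimple case $|\lambda|=1$ with a nontrivial Jordan block needs the extra remark that one may simply restrict to a true eigendirection where the dynamics is an isometry.
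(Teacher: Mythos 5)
Your proposal is correct and follows essentially the same route as the paper: a Franks-type linearization along $\SO(p)$ so that $h^{\tau(p)}$ acts as $Dg^{\tau(p)}(p)$ near $p$, an invariant disk inside a non-expanding eigenspace for item (1), and an orbit-averaged measure supported on the fixed disk for item (2). The only cosmetic differences are that the paper takes $\mathcal I_p$ inside the sum of all eigenspaces with $|\lambda|\le 1$ (orthogonalized by an adapted metric) rather than a single eigendirection, and averages an arbitrary non-atomic measure rather than normalized Lebesgue measure; your remark about handling nontrivial Jordan blocks by restricting to a genuine eigendirection is a sound precaution.
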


\begin{proof}
 We endow $M$ with a Riemannian metric. By \cite[Lemma (1.1)]{F}  there exists a local $C^1$-diffeomorphism $h$ $\epsilon$-closed to $g$ 
 in the $C^1$ topology, such that $Dh_x=Dg_x$ for all $x\in \SO(p)$.
 Moreover, 
 there is a $\epsilon$-neighbourhood $\SU$ of the $\SO(p)$ such that
$$h|_{\SO(p) \cup (M \setminus \SU)}=g|_{\SO(p) \cup (M \setminus \SU)}.$$
Furthermore, for all $y \in \SU$ it holds
\begin{equation}\label{ache}
h(y) = \exp_{g^{i+1}(p)}\circ \, Dg(g^i(p))\circ \exp^{-1}_{g^i(p)}(y),
\end{equation}
where $\exp$ is the exponential map.

As consequence of (\ref{ache}) we have 
$$d(h(y),g^{i+1}(p))=||Dg(g^i(p))\exp^{-1}_{g^i(p)}(y)||.$$
 Hence, if $N:=\displaystyle{\max_{y \in M} }||Dg(y)||$ then 
\begin{equation}\label{distancia}
d(h(y),g^{i+1}(p))\leq N||\exp^{-1}_{g^i(p)}(y)||=N d(y,g^i(p)).
\end{equation} 
We define $\ep_i:=\displaystyle{\frac{\ep}{(1+N)^{\tau(p)-i}}}$, $i=0,\dots,\tau(p)-1$. Note that $N \ep_{i-1}< \ep_{i}<\ep_{i+1}.$ 
Let $V:=B_{\ep_0}(p)$ and pick an arbitrary point $x \in V$.

\begin{Claim}
$d(h^i(x),g^i(p))<\ep_i$ for all $i=0,\ldots, \tau(p)-1$.\label{distancia2}
\end{Claim}

The proof goes by induction. 
For $i=0$, by definition of $V$, it holds $d(x,p)<\ep_0$. 
Suppose it is true for  $1< i <\tau(p)-1$. 
Then $$d(h^i(x),g^i(p))<\ep_i,\,\ \text{with}\,\  \ep_i<\ep\quad \mbox{for}\quad 
1< i <\tau(p)-1\,.$$

By (\ref{distancia}) we get
$$d(h^{i+1}(x),g^{i+1}(p))\leq N d(h^i(x),g^i(p))< N\ep_i<\ep_{i+1},$$
completing the induction. This proves the claim.

$\hfill\square$

Since that for any $i=0,\dots, \tau(p)-1$ we have that $d(h^i(x),g^i(p))<\ep$, we can apply (\ref{ache}) and get
$$h(x) = \exp_{g(p)}\circ Dg(p)\circ \exp^{-1}_{p}(x)$$
$$h^2(x) = \exp_{g^{2}(p)}\circ Dg(g(p))\circ \exp^{-1}_{g(p)}(h(x))$$
$$\vdots$$
and 
$$h^{\tau(p)}(x) = \exp_{g^{\tau(p)}(p)}\circ Dg(g^{\tau(p)-1}(p))\circ \exp^{-1}_{g^{\tau(p)-1}(p)}(h^{\tau(p)-1}(x)).$$
Therefore, by the chain rule, 
$$h^{i}(x) = \exp_{g^{i}(p)}\circ Dg^{i}(p)\circ \exp^{-1}_{p}(x)\hbox{, $\forall\, x\in V$, $i\in\{0,\dots,\tau(p)-1\}$}.$$
In particular,
\begin{equation}
h^{\tau(p)}(x) = \exp_{p}\circ Dg^{\tau(p)}(p)\circ \exp^{-1}_{p}(x)\hbox{, $\forall\, x\in V$}.
\label{alfa}
\end{equation}
Now, let $E_p$ be the direct sum of all eigenspaces of the eigenvalues $\lambda$, $|\lambda|\leq 1$ of $Dg^{\tau(p)}(p)$ and  define $\mathcal I_p$ as
$$\mathcal I_p:=\exp_p(E_p)\cap V.$$

\begin{Claim}
$\mathcal I_p$ defined as above is $h^{\tau(p)}$-invariant. 
\end{Claim}
For this, given $x\in \mathcal I_p$ we have $x=\exp_p(v)$ with $v\in E_p$ and $\|v\|<\epsilon_0$, and so
$$h^{\tau(p)}(x)=h^{\tau(p)}(\exp_p(v))=\exp_{p}\circ Dg^{\tau(p)}(p)\circ \exp^{-1}_{p}(\exp_p(v))=
\exp_{p}\circ Dg^{\tau(p)}(p)v.$$
Modifying the Riemannian metric, if necessary, we can suppose that all the eigenspaces of the eigenvalues $\lambda$, with $|\lambda|\leq 1$ are two by two orthogonal. 
In particular, $\|Dg^{\tau(p)}(p)v\|\leq\|v\|$, for all $v\in E_p$. 
Hence, we have that
\[ \displaystyle d(h^{\tau(p)}(x),p)=\|Dg^{\tau(p)}(p)v\|
\leq\|v\|<\epsilon_0,\quad \mbox{and so}\quad h^{\tau(p)}(x)\in V.
\] 
Therefore $h^{\tau(p)}(x)\in\mathcal I_p$ because $E_p$ is invariant by $Dg^{\tau(p)}(p)$. This concludes the proof that  $\mathcal I_p$ is $h^{\tau(p)}$-invariant.

$\hfill\square$

Now we are ready to proof the lemma:
\vspace{0.1cm}

\noindent {\bf{Proof of item (1).}}\/
Given  $\delta>0$ taking $\ep<\delta$ in the definition of $\mathcal I_p$ we get that 

\begin{eqnarray*}
\Gamma_{\delta}^+(h,p) & = & \displaystyle{\bigcap_{i=0}^{\infty} (h^i)^{-1}\left( B_{\delta}\left(h^i(p)\right)\right)}\\
&=&  \bigcap_{k=0}^{\infty}\left(\bigcap_{j=0}^{\tau(p)-1} \left(h^{k\tau(p)+j}\right)^{-1}\left(B_{\delta}\left(h^{k\tau(p)+j}(p)\right)\right)
 \right)\\
 &=& \displaystyle{\bigcap_{k=0}^{\infty}\left(\bigcap_{j=0}^{\tau(p)-1}\left(h^{k\tau(p)}\right)^{-1} \left(\left(h^j\right)^{-1}
\left(B_{\delta}\left(h^{j}(p)\right)\right)\right)\right)}.
 \end{eqnarray*}
By Claim \ref{distancia2}, $V\subset \left(h^j\right)^{-1}\left(B_{\ep_j}\left(h^{j}(p)\right)\right)\subset \left(h^j\right)^{-1}
\left(B_{\delta}\left(h^{j}(p)\right)\right)$ and so
\begin{eqnarray*}
\Gamma^+_{\delta}\left(h,p\right)&=& \displaystyle{\bigcap_{k=0}^{\infty}\left(\bigcap_{j=0}^{\tau(p)-1}\left(h^{k\tau(p)}\right)^{-1}\underbrace
{\left(\left(h^j\right)^{-1}\left(B_{\delta}\left(h^{j}(p)\right)\right)\right)}_{\supset V\supset\mathcal I_p} \right)}\\
&\supset&
\displaystyle{\bigcap_{k=0}^{\infty}\left(\left(h^{k\tau(p)}\right)^{-1}\left(\mathcal I_p\right)\right).}
\end{eqnarray*}
But, since $$h^{\tau(p)}\left(\mathcal I_p\right)\subset\mathcal I_p \hbox{\,\, we get \,\,} \left(h^{\tau(p)}\right)^{-1}\left(\mathcal I_p\right)
\supset\mathcal I_p.$$ 
Thus, $\Gamma^+_\delta\left(h,p\right) \supset\mathcal I_p$, concluding the proof of Item (1).

$\hfill\square$

\noindent {\bf{Proof of item (2).}}\/
Observe that $\mathcal I_p=V$ because $E_p=T_pM$ and
so  (\ref{alfa}) implies that
\begin{equation}
h^{\tau(p)}|_V = id_V 
\label{beta}
\end{equation}
and hence $\left(h^{k\tau(p)}\right)^{-1}|_V = id_V $. Thus, for $\delta'<\ep_0$, $\Gamma^+_{\delta'}(h,p)\subset V$. In fact, we have

\begin{eqnarray*}
\Gamma^+_{\delta'}(h,p)&=&\displaystyle{\bigcap_{k=0}^{\infty}\left(h^{k\tau(p)}\right)^{-1} 
\underbrace{\bigcap_{j=0}^{\tau(p)-1} \left(h^{j}\right)^{-1}\left(B_{\delta'}\left(h^{j}(p)\right)\right)}_{\subset B_{\ep_0}(p)=V}}\\
&=&\displaystyle{\bigcap_{k=0}^{\infty}\left(\bigcap_{j=0}^{\tau\left(p\right)-1} id_V\left( 
\left(h^j\right)^{-1}\left(B_{{\delta'}}\left(h^{j}\left(p\right)\right)\right)\right)\right)}\\
&=&\displaystyle{\bigcap_{k=0}^{\infty}\bigcap_{j=0}^{\tau\left(p\right)-1} \left(h^j\right)^{-1}\left(B_{{\delta'}}\left(h^{j}\left(p\right)
\right)\right)}\\
&=&\displaystyle{\bigcap_{j=0}^{\tau\left(p\right)-1} \left(h^j\right)^{-1}\left(B_{{\delta'}}\left(h^{j}\left(p\right)\right)\right).}\\
\end{eqnarray*}

Then $\Gamma_{{\delta'}}^+\left(h,p\right)=\displaystyle{\bigcap_{i=0}^{\infty} \left(h^i\right)^{-1}\left(B_{{\delta'}}\left(h^i\left(p\right)\right)\right)}
=\displaystyle{\bigcap_{j=0}^{\tau\left(p\right)-1} \left(h^j\right)^{-1}\left(B_{{\delta'}}\left(h^{j}\left(p\right)\right)\right)}$ which is  a finite intersection of open sets and so it is also an 
open set.
Set $V':=\Gamma^+_{{\delta'}}\left(p\right)$.

Now, note that (\ref{beta}) implies that $h\left(A\right)=A$ for all invariant set $A\subset V.$ In particular, $h\left(V'\right)=V' $.

Next we  construct a non-atomic $f-$invariant probability $\mu_h$ on $M$. 
For this, we consider $\mu$ a non-atomic probability measure supported on $\mathcal I_p$. To this end, 
given a Borel set $C\subset M$,  define the measure $\mu_h$ by $$\mu_h\left(C\right)=\frac{1}{\tau\left(p\right)}\sum_{i=0}^{\tau\left(p\right)
-1}\frac{ \mu\left(h^i \left(C \cap V'\right)\right)}{
\mu\left(V'\right)}.$$

Since $h\left(V'\right)=V' $, we obtain
\begin{eqnarray*}
\mu_h\left(M\right)&=& \frac{1}{\tau\left(p\right)}\sum_{i=0}^{\tau\left(p\right)-1} \frac{ \mu\left(h^i \left(M \cap V'\right)\right)}{\mu
\left(V'\right)}
=\frac{1}{\tau\left(p\right)}\sum_{i=0}^{\tau\left(p\right)-1} \frac{ \mu\left(h^i \left( V'\right)\right)}{\mu\left(V'\right)}\\
&=&\frac{1}{\tau\left(p\right)}\sum_{i=0}^{\tau\left(p\right)-1} \frac{ \mu\left( V'\right)}{\mu\left(V'\right)}
= \displaystyle{\dfrac{\tau\left(p\right)}{\tau\left(p\right)}\dfrac{ \mu\left(V'\right)}{\mu\left(V'\right)}=1}.
\end{eqnarray*}

Hence, $\mu_h$ is a probability measure. 
Since 
\begin{eqnarray*}
\mu_h\left(h^{-1}\left(C\right)\right)&=& \frac{1}{\tau\left(p\right)}\sum_{i=0}^{\tau\left(p\right)-1} \frac{\mu\left(h^i \left( h^{-1}
\left(C\right)\cap V'\right)\right)}{\mu\left(V'\right)}\\
&=& \frac{1}{\tau\left(p\right)}\sum_{i=0}^{\tau\left(p\right)-1} \frac{\mu\left( h^{i}\left(C\cap V'\right)\right)}{\mu\left(V'\right)}=\mu_h
(C),
\end{eqnarray*}
we conclude that $\mu_h$ is $h$-invariant.

For $\mathcal I_p$, we obtain
\begin{eqnarray*}
\mu_h\left(\mathcal I_p \right)&=& \frac{1}{\tau\left(p\right)}\sum_{i=0}^{\tau\left(p\right)-1} \frac{\mu\left(h^i \left( \mathcal I_p\cap 
V'\right)\right)} {\mu\left(V'\right)}\\
&=&\frac{1}{\tau\left(p\right)}\sum_{i=0}^{\tau\left(p\right)-1} 
\frac{\mu\left(h^i \left(\mathcal I_p\cap V'\right)\right)}{\mu\left(V'\right)}\geq\frac{1}{\tau\left(p\right)}\frac{\mu\left(\mathcal I_p\cap 
V'\right)}{\mu\left(V'\right)}
\end{eqnarray*}
because $\mathcal I_p\cap V'$ is in this case  an open 
subset of $V' \subset \supp\mu$.
Therefore, we have $\mu_h\left(\mathcal I_p\right)>0$, finishing the proof of item (2).
All together concludes the proof of Lemma \ref{superlema}.
\end{proof}

\begin{corollary}\label{valesemperturbar}
Under the same hypotheses of the lemma, if  $p$ is a periodic hyperbolic saddle then the statement of item 1 can be obtained for $g$ itself without 
perturbations.
\end{corollary}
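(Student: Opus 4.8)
The plan is to build the required invariant manifold directly from the local stable manifold of the hyperbolic saddle $p$, so that no $C^1$-perturbation of $g$ is needed. Since $p$ is a hyperbolic saddle, $Dg^{\tau(p)}(p)$ has no eigenvalue on the unit circle and at least one eigenvalue of modulus strictly less than $1$; let $E^s_p\subset T_pM$ denote the sum of the generalized eigenspaces of the eigenvalues of modulus $<1$, so that $s:=\dime(E^s_p)\geq 1$.

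First I would invoke the stable manifold theorem for the fixed point $p$ of the local diffeomorphism $g^{\tau(p)}$; this is legitimate because $g\in\Diff^1_{loc}(M)$ is a diffeomorphism on a neighbourhood of $\SO(p)$ and only forward orbits near $p$ intervene. It provides, for every sufficiently small $\rho>0$, a $C^1$ embedded disk $W^s_\rho(p)\ni p$, tangent to $E^s_p$ at $p$ and hence of dimension $s\geq 1$, that is forward invariant under $g^{\tau(p)}$ and satisfies $d\big(g^{n\tau(p)}(y),p\big)\leq\rho$ for every $y\in W^s_\rho(p)$ and every $n\geq 0$.

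Next, set $N:=\max_{y\in M}\|Dg(y)\|$ and $L:=\max\{1,N^{\tau(p)-1}\}$, so that each iterate $g^{j}$ with $0\leq j\leq\tau(p)-1$ is $L$-Lipschitz on $M$. Given $\delta>0$, pick $\rho$ small enough that $L\rho\leq\delta$ and $W^s_\rho(p)$ is defined, and put $\mathcal I_p:=W^s_\rho(p)$; this is a $g^{\tau(p)}$-invariant submanifold of $M$ through $p$ with $\dime(\mathcal I_p)=s\geq 1$. To see that $\mathcal I_p\subset\Gamma^+_\delta(g,p)$, take $y\in\mathcal I_p$ and $n\in\N$, and write $n=k\tau(p)+j$ with $0\leq j<\tau(p)$. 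Since $p$ has period $\tau(p)$ we have $g^n(p)=g^j(p)$, and since $g^{k\tau(p)}(y)\in W^s_\rho(p)$ and $g^{j}$ is $L$-Lipschitz,
\[
d\big(g^n(y),g^n(p)\big)=d\big(g^{j}(g^{k\tau(p)}(y)),\,g^{j}(p)\big)\leq L\,d\big(g^{k\tau(p)}(y),p\big)\leq L\rho\leq\delta.
\]
Thus $y\in\Gamma^+_\delta(g,p)$, so $\mathcal I_p\subset\Gamma^+_\delta(g,p)$, which is exactly the conclusion of item~(1) of Lemma~\ref{superlema} with $h$ replaced by $g$.

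I do not expect a genuine obstacle here; the only delicate point is that $p$ is periodic rather than fixed, which forces one to interleave the iterates of $g$ with those of $g^{\tau(p)}$ and to absorb the bounded distortion $L$ accumulated over one period into the choice of the radius $\rho$. It is also worth stating explicitly that the local stable manifold theorem applies to the (only locally invertible) map $g$ because near $p$ it is a genuine $C^1$ diffeomorphism and the construction uses forward orbits only.
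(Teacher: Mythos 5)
Your proof is correct and follows essentially the same route as the paper: both take $\mathcal I_p$ to be the local stable manifold of the hyperbolic saddle $p$, which exists without any perturbation and lies inside $\Gamma^+_\delta(g,p)$. You are somewhat more careful than the paper in actually verifying that containment (handling the periodic rather than fixed point via the Lipschitz constant of $g^j$ over one period), but the underlying idea is identical.
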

\begin{proof}
Since $p$ is a periodic hyperbolic saddle, the local stable manifold $W^s_{\varepsilon}\left(p,g\right)$ is a $C^1$-submanifold. We will denote it by 
$I_g$. Hence we can define $\mu=m_{I_g}$ as the induced Lebesgue measure on it, and as before we get that $\mu=m_{I_g}$ is not  an measure expansive, leading to a contradiction.
\end{proof}

\subsection{Proof of Theorem \ref{teob}.}
Let $f$ be a local $C^1$ diffeomorphism robustly positive measure-expansive map.  Then there exists a  neighbourhood $\mathcal U\left(f\right)$ 
such that all of  its elements are positive measure-expansive. Assume, by contradiction, that $f$ is not expanding. 
By \cite[Theorem 1.3]{Ar}, there exists $g \in \mathcal{U}\left(f\right)$, with $p \in \Per\left(g\right)$ of period $\tau\left(p\right)$ such 
that $Dg^{\tau\left(p\right)}\left(p\right)$ has at least one eigenvalue with modulus less  or equal to one.

By  Lemma \ref{superlema}(1), there are $h$  near $g$,  
 $\delta>0$  
and a submanifold $\mathcal I_p \subset \Gamma^+_\delta\left(h,p\right)$,
and a measure $\mu\in \mathcal M\left(M\right) \setminus \mathcal A\left(M\right)$  such that $\mu\left(\mathcal I_p\right)>0$. This 
implies that $h$ is not measure expansive. As $h$ is near $f$, this contradicts that $f$ is robustly positive measure expansive.
This finishes the proof of Theorem \ref{teob}. 

$\hfill\square$

\subsection{Proof of Theorem \ref{colo}}\label{s.propa}

We use the same ideas as in  \cite[Theorem 1.1]{ALL}, but here we consider a dense and open set $\mathcal{R}$ defined by $\mathcal{R}= \mathcal{H} \cup \ov{\mathcal{H}}^c$, where $\mathcal{H}$ is the  set 
 $$\mathcal{H}=\{g \in \Diff_{loc}^1\left(M\right);\,\, g \,\, \mbox{has a saddle hyperbolic periodic point }\,\, q\}.
 $$

First, observe that $\mathcal{H}$ is open:
 given $g \in \mathcal{H}$ there exists a hyperbolic saddle  point $q \in \Per_h\left(g\right)$. 
  Then there is a neighbourhood $\mathcal U$ of $g$ in the $C^1$ topology and a continuous map $p:\mathcal U \to M$ with $p\left(g\right)=q$ 
  such that for $\varphi \in \mathcal U$, $p\left(\varphi\right)$ is a hyperbolic saddle point of $\varphi$. 
 Since the map $\varphi\mapsto D\varphi^{\tau\left(p\left(\varphi\right)\right)}\left( p\left(\varphi\right)\right)$ is  continuous 
 there is a neighbourhood $\mathcal V \subset \mathcal U$ of $g$ such that all eigenvalues of 
 $D\varphi^{\tau(p(\varphi))}\left(p(\varphi)\right)$ do not belong to the unit circle. 
 Hence, for each $\varphi \in  \mathcal V$, $p\left(\varphi\right)$ is a saddle hyperbolic point of $\varphi$. 
 Therefore, $\mathcal V \subset \mathcal H$, concluding that $\mathcal{H}$ is open.
 \vspace{0.1cm}
 
 Since $\mathcal{H}$ is open, we get that  $\mathcal{R}$ defined above is open and dense on $\Diff_{loc}^1\left(M\right)$.
 \vspace{0.1cm}

 Now we claim that if $f \in \mathcal{R}$ and $f$ is $\mu-$expansive for all $\mu \in \mathcal{M}\left(M\right)\setminus \mathcal{A}\left(M\right)$, then  $f$ is 
 expanding. 
 
 The proof  goes by contradiction.
 Assume that $f\in\mathcal R$, $f$ is $\mu$-expansive and $f$ is not expanding. 
 Then \cite[Theorem 1.3]{Ar} implies that there is a sequence $\{f_n\}$ converging to $f$ with periodic points $p_n$ for $f_n$ which have at least 
 one eigenvalue with modulus less or equal to one. Then, by \cite[Lemma (1.1)]{F}, we can find a sequence $\{g_n\}$ converging to $f$ such that 
 $p_n$ is a hyperbolic saddle for every $n$. 
 Hence $f\notin \overline{\mathcal H}^c$.
 Thus $f\in \mathcal H$ and so has a hyperbolic saddle periodic point $p$ and by Corollary \ref{valesemperturbar}, there exists a measure 
 supported in $\mathcal{I}_p$ which is not positively expansive, leading to a contradiction.
 $\hfill\square$

\section{Proof of Theorem \ref{teod} }\label{s.teod}

To prove Theorem \ref{teod}, recall that $\mathcal{PIM}$ is the set  of $C^1$ diffeomorphisms 
$f$ on $M$ that are positively $\mu$-expansive for every $ \mu \in \mathcal{M}_f\left(M\right) \setminus
\mathcal{A}\left(M\right)$,  $\mathcal{M}_f\left(M\right)$ is the set of $f$-invariant measures on $M$ and $\mathcal{A}\left(M\right)$ is the set of atomic measures on $M$.

The proof goes by contradiction. Let $f \in \D^1_\omega(M)$ and $\SU(f)$ be
an open neighborhood of $f$ with
$\mathcal U\left(f\right) \subset \mathcal{PIM} \cup \{f\}$ and
assume that $f$ has no dominated splitting. 
By \cite[Theo. 6]{BDP}, since $f$ has no dominated splitting then there are a
conservative diffeomorphism $g$ in $\mathcal U\left(f\right)$ and a periodic point $p$
of $g$ such that $Dg^{\tau\left(p\right)}\left(p\right)=Id$, where $\tau(p)$ is the period of $p$.

Applying Lemma \ref{superlema} we conclude that there exists a local $C^1$-diffeomorphism $h \in \mathcal U \left(f\right)$ such that it coincides with $g$ in 
the orbit of $p$ and for  every $\delta>0$ there exists a submanifold 
$\mathcal{I}_p\subset \Gamma_{\delta}^+\left(h,p\right)$, $\mathcal I_p\ni p$ and there is a probability $h$-invariant measure $\mu_h$ over $M$ 
such that $\mu_h\left(\mathcal I_p\right)>0$.
Therefore  $\mu_h\left(\Gamma_{\delta}^+\left(h,p\right)\right)>0$ for $\delta$ arbitrary small,
leading to a contradiction (because $h\in\mathcal{PIM}$). This ends the proof of Theorem \ref{teod}.

$\hfill\square$

\vspace{1cm}
\noindent
{\em Alma Armijo}\/
 Departamento de Matem\'atica,
Universidad de las Am\'ericas and
Universidad de Santiago de Chile,
Santiago, Chile. 
\noindent E-mail:  almaarmijo@gmail.com
\vspace{0.2cm}

\noindent{\em M. J. Pacifico }\/
\noindent Instituto de Matem\'atica,
Universidade Federal do Rio de Janeiro,
C. P. 68.530, CEP 21.945-970,
Rio de Janeiro, RJ, Brazil.
\noindent E-mail:   pacifico@im.ufrj.br

\end{document}